\newcommand{\excise}[1]{}
\newtheorem{theorem}{Theorem}
\numberwithin{theorem}{section}
\newtheorem{corollary}[theorem]{Corollary}
\newtheorem{lemma}[theorem]{Lemma}
\theoremstyle{definition}
\newtheorem{definition}[theorem]{Definition}
\newtheorem{example}[theorem]{Example}
\DeclareMathOperator{\indeg}{indeg}
\title{Reconstructing nearly simple polytopes from their graph}
\author{Joseph Doolittle}
\begin{document}
\begin{abstract}

   We present a partial description of which polytopes are reconstructible from their graphs. This is an extension of work by Blind and Mani (1987) and Kalai (1988), which showed that simple polytopes are reconstructible from their graphs.
   
   In particular, we introduce a notion of $h$-nearly simple and prove that 1-nearly simple and 2-nearly simple polytopes are reconstructible from their graphs. We also give an example of a 3-nearly simple polytope which is not reconstructible from its graph. Furthermore, we give a partial list of polytopes which are reconstructible from their graphs in a non-constructive way.
\end{abstract}
\maketitle 

\section{Introduction}
In 1987, Blind and Mani published a proof that the face lattice of simple polytopes can be determined from the graph of the polytope \cite{BM}. In 1988, Kalai published a simple proof of the same fact \cite{Kalai}. In that paper, Kalai noted that the k-faces of $P$ were given by k-regular subgraphs of its graph, $G_P$, with the condition that they were initial with respect to a ``good" orientation. The definition given for ``good" is equivalent to being acyclic and minimizing the value of a function of orientations which Kalai called $f^O$. Work on this subject has broadly ignored reconstruction of non-simple polytopes from their graphs, sometimes going so far as to assume that only simple polytopes are reconstructible from their graphs  \cite{Friedman}.

In this paper, we show that there are two more classes of polytopes which are reconstructible from their graphs. A $d$-dimensional convex polytope is $h$-nearly simple if all but $h$ of its vertices are contained in exactly $d$ edges. The main results of this paper are that $1$-nearly simple polytopes are reconstructible from their graphs, that $2$-nearly simple polytopes are reconstructible from their graphs, and that there are $3$-nearly simple polytopes that are not reconstructible from their graphs. See Theorem \ref{1simple}, Theorem \ref{2simple}, and Example \ref{example}, respectively.

Furthermore, the ``goodness" that Kalai noted about $f^O$-minimizing orientations is expanded upon in Lemma \ref{fOimplies}. These orientations are of particular interest, and Joswig, et al. provide a characterizations of these orientations based on the sinks of 2-faces of a simple polytope \cite{Joswig}. Determining a way to obtain and verify these orientations in polynomial time is an ultimate goal. A conjecture of Perles would give such a polynomial time method, but a counter-example to the conjecture was given by Haase and Ziegler \cite{HZ}.

The methods used to obtain the first two results focus on determining orientations which minimize $f^O$. Finding such orientations allows us to determine some 2-faces of $P$, which in turn allows us to determine the graph of the polytope arising from the truncation of $P$ at a non-simple face. The last result comes from an analysis of data provided by Miyata et al. \cite{Miyata}.

\section{Definitions}
\begin{definition}
The edge-vertex graph (or graph) of a polytope $P$ is the graph $G_P=(V_P,E_P)$ where $V_P=\{$vertices of $P\}$ and $E_P=\{{v_i,v_j}|$there is a 1-dimensional face of $P$ that contains $v_i$ and $v_j\}$.
\end{definition}

\begin{definition}

Let $P$ be a convex polytope and let $v$ be an element of $\mathbb{R}^n$ whose dot product with each vertex of $P$ is distinct.  The orientation of the graph $G_P$ given by orienting each edge to point towards the vertex of the edge whose dot product with $v$ is larger is a \textit{linear functional orientation}.
\end{definition}

For this to be a good definition, there must exist such a vector for any polytope. The set of vectors whose dot product with the vertices of $P$ is not an injective function is the union of a finite number of $(d-1)$-dimensional vector spaces. Therefore there exists some vector whose dot product with the vertices of $P$ is injective, and therefore each edge of $G_P$ has a well defined orientation with respect to that vector. So $G_P$ has a well defined linear functional orientation.

\begin{definition}
Let $O$ be an orientation of a graph $G=(V,E)$. Define $f^O:= \sum_{v \in V} 2^{\indeg(v)}$.
\end{definition}

\begin{definition}
A $d$-dimensional polytope $P$ is \textit{simple} if each vertex of $P$ has degree $d$ in $G_P$.
\end{definition}

\begin{definition}
A $d$-dimensional polytope $P$ is \textit{simple at vertex $v$} if $v$ has degree $d$ in $G_P$.
\end{definition}

If $P$ is simple at vertex $v$, then each subset of the edges of $P$ containing $v$ is contained in a unique face of $P$ that has dimension equal to the size of the subset.

\begin{definition}
A \textit{sink of a graph $G$} with respect to an orientation $O$ is a vertex of $G$ for which all edges incident to it in $G$ are oriented towards it with respect to $O$.
\end{definition}

\begin{definition}
A \textit{sink of a polytope $P$} with respect to an orientation $O$ is a sink of the graph of $P$ with respect to $O$.
\end{definition}

\begin{definition}
Let $O$ be an orientation of a graph $G$. A set $J \subset V_G$ is an \textit{initial set} of $G$ with respect to $O$ if each edge of $G$ with one endpoint in $J$ and one endpoint in $G \setminus J$ is oriented away from $J$ with respect to $O$.
\end{definition}

The following theorem is due to Blind and Mani '87 \cite{BM} and Kalai '88 \cite{Kalai}.

\begin{theorem}
\label{Kalai}
Let $P$ be a simple d-dimensional polytope. Then the face lattice of $P$ can be determined from the graph of $P$. 
\end{theorem}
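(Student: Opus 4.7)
The plan is to identify, purely from the abstract graph $G_P$, which acyclic orientations arise from linear functionals, and then read the faces off one such orientation. I would call an acyclic orientation \emph{good} if every nonempty face of $P$ has a unique sink with respect to it; linear functional orientations are automatically good, since a generic linear functional attains its maximum on each face at a unique vertex. My target is to characterize good orientations graph-theoretically and then recover the face lattice from any one of them.

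First I would compute $f^O$ for a good orientation by a double count. Because $P$ is simple, at a vertex $v$ of in-degree $k$ every subset of the $k$ incoming edges spans a unique face of $P$ having $v$ as its (necessarily unique, by goodness) sink; so $v$ is the sink of exactly $2^{\indeg(v)}$ faces. Summing over $v$ and using the fact that a good orientation gives each face exactly one sink yields
\[
    f^O \;=\; \sum_{v\in V_P} 2^{\indeg(v)} \;=\; \#\{\text{nonempty faces of }P\}.
\]
For a general acyclic orientation $O'$, each face still contains at least one sink (an acyclic orientation of a graph has a sink), so the same sum counts each face at least once, giving $f^{O'}\geq f^O$ with equality iff $O'$ is good. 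This identifies good orientations as precisely the acyclic orientations minimizing $f^{(\cdot)}$, a condition that depends only on $G_P$.

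Next I would extract faces from a fixed good orientation $O$. A $k$-face $F$ of $P$ is simple of dimension $k$, hence its graph $G_F$ is connected and $k$-regular; moreover, since $O$ comes from (is consistent with) a linear functional, the vertex set of $F$ is an initial set of $G_P$ under the reverse orientation of some good orientation (namely, the sublevel set of the functional up to $F$'s sink). I would therefore define a candidate $k$-face to be a connected $k$-regular induced subgraph of $G_P$ whose vertex set is an initial set with respect to some good orientation. The claim to prove is that the candidate $k$-faces are exactly the graphs of the $k$-faces of $P$. One direction follows from the preceding remark; the other requires showing that no spurious initial connected $k$-regular subgraph exists, which one can argue by using a vertex of the candidate whose incoming edges (within the candidate) span a face of $P$ by simplicity, and then matching dimensions. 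Having identified all $k$-face subgraphs, the face lattice is reconstructed by inclusion.

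The main obstacle is the last step: verifying that every candidate $k$-face really is a face, rather than some combinatorial coincidence. The simplicity of $P$ is essential here, because it is what allows the $k$ incoming edges at a sink to determine a $k$-dimensional face uniquely; this is the input that makes the "initial $k$-regular connected" characterization tight. A secondary worry is computational: although the characterization of good orientations via minimizing $f^O$ is intrinsic to the graph, actually finding such an orientation is a search over exponentially many acyclic orientations, and the argument above only shows existence and recovery, not efficient reconstruction.
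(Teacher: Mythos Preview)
Your proposal is correct and is precisely Kalai's argument, which the paper cites for this theorem rather than proving it independently; the paper's surrounding lemmas on $f^O$ generalize the same double count to non-simple polytopes, but in the simple case they reduce to exactly what you wrote. One minor slip in exposition: $V_F$ is not ``the sublevel set of the functional up to $F$'s sink'' for a single fixed functional---rather, for each face $F$ you choose a \emph{separate} linear functional (one generically minimized on $F$) so that $V_F$ becomes initial under the resulting good orientation---but this does not affect the argument, and your sketch of the converse (take a sink of the candidate, use simplicity to span a $k$-face, then use unique sinks plus initiality to force equality) is the right one.
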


\begin{lemma}
\label{fOfaces}
Let $P$ be a $d$-dimensional polytope, and $O$ be an acyclic orientation of $G_P$. Then $f^O \geq \sum_{v \in V_P} |\{$faces $F$ of $P | v$ is a sink of $G_F$ with respect to $O\}| \geq \sum_{i=0}^d f_i(P)$, where $f_i$ is the number of $i$-faces of $P$. 
\end{lemma}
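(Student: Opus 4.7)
The strategy splits naturally across the two inequalities, with the left one being the more delicate. My plan for the left inequality is to interpret $f^O = \sum_{v \in V_P} 2^{\indeg(v)}$ as counting pairs $(v,S)$ where $S$ is a subset of the in-edges incident to $v$, and then to exhibit an injection from pairs $(v,F)$ with $v$ a sink of $G_F$ into such pairs, given by sending $(v,F)$ to the set of edges of $F$ incident to $v$. The sink condition guarantees that this set consists of in-edges of $v$, so the map lands in the claimed target.

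The content is showing injectivity: if two faces $F_1, F_2$ of $P$ both contain $v$ and have the same set of edges at $v$, then $F_1 = F_2$. The cleanest way I see to do this is through the vertex figure $\mathrm{vf}_v(P)$, whose faces are in inclusion-preserving bijection with the faces of $P$ containing $v$ (with the empty face of $\mathrm{vf}_v(P)$ corresponding to the $0$-face $\{v\}$ of $P$, and $\mathrm{vf}_v(P)$ itself corresponding to $P$). Under this bijection, a face $F$ of $P$ containing $v$ maps to a face of $\mathrm{vf}_v(P)$ whose vertex set is exactly the set of edges of $F$ at $v$. Because any face of a polytope is determined by its vertex set, equal edge sets at $v$ force $F_1 = F_2$. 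Summing the injective bound over $v$ gives the left inequality. Note that for simple polytopes this injectivity is trivial, since the faces at $v$ are in explicit bijection with subsets of edges at $v$; the vertex figure is the tool needed to extend the argument to the general polytope.

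For the right inequality, I would swap the order of summation to rewrite the middle quantity as $\sum_F \#\{v : v \text{ is a sink of } G_F \text{ w.r.t. } O\}$, where $F$ ranges over the faces of $P$ of dimension $0$ through $d$. Because $O$ restricted to $G_F$ is still acyclic (sub-orientations of acyclic orientations are acyclic), and every finite acyclic digraph has at least one sink, each $F$ contributes at least $1$ to the sum, and summing yields $\sum_{i=0}^d f_i(P)$. The $0$-faces contribute trivially (vacuously a sink of a vertex-only subgraph), and the $d$-face contributes because $G_P$ itself has a sink.

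The main obstacle is the injectivity step. Once the vertex figure correspondence and the vertex-set determination of faces are in hand, the rest of the proof is a straightforward double counting together with the standard sink-exists fact for finite acyclic orientations.
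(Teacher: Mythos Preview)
Your proof is correct and follows essentially the same strategy as the paper: map each face $F$ with sink $v$ to the set of in-edges of $v$ lying in $F$, use injectivity of this map to get the left inequality, and use existence of sinks in acyclic orientations to get the right inequality. The only difference is that you justify the injectivity via the vertex figure (faces of $P$ through $v$ correspond to faces of $\mathrm{vf}_v(P)$, which are determined by their vertex sets), whereas the paper simply asserts the map is injective; your argument is a valid way to fill in that step.
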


\begin{proof}

There is an injective function between faces of $P$ with $v$ as a sink of their graph and subsets of the edges of $P$ oriented towards $v$, given by selecting the edges oriented towards $v$ that are in the face. Since $2^{\indeg(v)}$ is the size of the codomain of this function, the number of faces of $P$ with $v$ as a sink is less than or equal to $2^{\indeg(v)}$. Summing the number faces with $v$ as a sink of their graph and $2^{\indeg(v)}$ over all vertices of $P$ gives $f^O \geq \sum_{v \in V_P} |\{$faces $F$ of $P | v$ is a sink of $F$ with respect to $O\}|$. Each face of $P$ has at least one sink since $O$ is acyclic. Then $\sum_{v \in V_P} |\{$faces $F$ of $P | v$ is a sink of $F$ with respect to $O\}| \geq \sum_{i=0}^d f_i$ since each face of $P$ appears at least once in the left hand sum and appears exactly once in the right hand sum.
\end{proof}

\begin{lemma}
\label{fOimplies}
Let $P$ be a $d$-polytope and $O$ an acyclic orientation of its graph. Then $f^O = \sum_{i=0}^d f_i$ if and only if the following conditions hold:

\begin{itemize}
\item Each face of $P$ has a unique sink.
\item Each face of $P$ is simple at its sink.
\item For each vertex $v$ of $P$, there is a face of $P$ with $v$ as its sink and containing each edge of $P$ oriented towards $v$.
\end{itemize}
\end{lemma}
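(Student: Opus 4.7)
The plan is to attribute each of the three conditions to the chain
\[
f^O \;\geq\; \sum_{v \in V_P}|\{F : v \text{ is a sink of } G_F\}| \;\geq\; \sum_{i=0}^d f_i
\]
from Lemma \ref{fOfaces}. The overall equality is equivalent to both intermediate inequalities being equalities. The right one fails unless every face contributes exactly once to the middle sum, which is condition (i). The left one stems from the injection $\phi_v : \{F : v \text{ is a sink of } G_F\} \to 2^{E_v}$ sending $F$ to its edge set at $v$, where $E_v$ denotes the edges at $v$ oriented toward $v$; it is an equality precisely when each $\phi_v$ is bijective.

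For $(\Leftarrow)$, I would use (iii) to produce a face $F^\star$ with $\phi_v(F^\star) = E_v$, and invoke (ii) to make $F^\star$ simple at $v$. At a simple vertex every subset of the edges spans a unique subface containing $v$, so every $S \subseteq E_v$ lies in the image of $\phi_v$ (the resulting subface automatically has $v$ as a sink); combined with (i), this forces both inequalities to be equalities.

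For $(\Rightarrow)$, condition (i) is immediate and (iii) is the case $S = E_v$ of surjectivity of $\phi_v$. The substantive step is (ii). Given a face $F$ with sink $v$, set $k = \dim F$ and $r = \deg_F(v)$, and let $S_F$ denote the edges of $F$ at $v$. For each $T \subseteq S_F$ the unique face $F_T$ with $\phi_v(F_T) = T$ satisfies $F \cap F_T = F_T$, because the intersection is a face with $v$ as sink and edges at $v$ equal to $T$, and injectivity of $\phi_v$ identifies it with $F_T$. Hence $F_T \subseteq F$, producing $2^r$ subfaces of $F$ containing $v$; the same injectivity principle caps this count at $2^r$, so the vertex figure $F/v$ is a $(k-1)$-polytope on $r$ vertices with $2^r$ total faces. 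Since $2^n$ is the extremal face count for a polytope on $n$ vertices, achieved only by the simplex, $F/v$ is an $(r-1)$-simplex, forcing $r = k$ and making $F$ simple at $v$. The main obstacle is this final step: the global bijectivity of $\phi_v$ does not a priori restrict to $F$, and the intersection identity $F \cap F_T = F_T$ is what pins down the subface count exactly, after which the simplex characterization of polytopes achieving the extremal face count closes the argument.
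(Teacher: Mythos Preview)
Your proof is correct and follows the same strategy as the paper: both directions hinge on turning the two inequalities of Lemma~\ref{fOfaces} into equalities via the bijectivity of the maps $\phi_v$. For condition~(ii) the paper first isolates the maximal face $F_v$ (your $F^\star$), shows it is simple at $v$, and then deduces the same for its subfaces, whereas you argue directly for an arbitrary $F$ using the intersection identity $F\cap F_T=F_T$; your explicit appeal to the simplex as the unique polytope on $n$ vertices with $2^n$ faces makes precise the vertex-figure step that the paper compresses into a one-line rank assertion.
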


\begin{proof}
$\Rightarrow$ Assume that $f^O = \sum_{i=0}^d f_i$. Then by Lemma \ref{fOfaces}, $\sum_{i=0}^d f_i = \sum_{v \in V_P} |\{$faces $F$ of $P | v$ is a sink of $F$ with respect to $O\}|$. Therefore every face of $P$ has a unique sink. 

Since $f^O = \sum_{i=0}^d f_i$, by Lemma \ref{fOfaces}, $\sum_{v \in V_P} 2^{\indeg(v)} = \sum_{v \in V_P} |\{$faces $F$ of $P | v$ is a sink of $F$ with respect to $O\}|$. Since there are at most $2^{\indeg(v)}$ faces of $P$ that have $v$ as its sink, the summands must be equal. That is, for all $v$, $2^{\indeg(v)} = |\{$faces $F$ of $P | v$ is a sink of $F$ with respect to $O\}|$. Each face of $P$ that has $v$ as a sink uniquely specifies a subset of edges oriented towards $v$  by containment in that face. Since there are $2^{\indeg(v)}$ distinct subsets of edges oriented towards $v$, and there are $2^{\indeg(v)}$ faces of $P$ with $v$ as a sink, there is a face $F_v$ of $P$ that has $v$ as its sink and contains all the edges oriented towards $v$ in $G_P$. Every subface of $F_v$ which contains $v$ has $v$ as a sink and maps to a subset of edges oriented towards $v$. Since this is injective, and the sets are the same size, this is a bijective map. So the rank of $F_v$ in the face poset of $P$ is indeg$(v)+1$ which is one more than the degree of $v$ in $G_{F_v}$; therefore the dimension of $F_v$ is the degree of $v$ in $G_{F_v}$, so $F_v$ is simple at $v$. Each face that has $v$ as its sink is a subface of $F_v$, and so is simple at $v$. So each face of $P$ is simple at its sink.

$\Leftarrow$ We write $f^O$ as $\sum_{v \in V_P} 2^{\indeg(v)}$ and expand the summand to be the sum of 1 over all choices of subsets of edges oriented towards $v$. For each $v$, let $F_v$ be the face of $P$ with $v$ as its sink containing all edges oriented towards $v$. Then $F_v$ is simple at $v$ by assumption, and any subset of edges oriented towards $v$ determines a subface of that face. So $\sum_{v \in V_P} 2^{\indeg(v)} \leq \sum_{v \in V_P} |\{$faces $F$ of $P | v$ is a sink of $F$ with respect to $O\}|$. Since each face has a unique sink, $ \sum_{v \in V_P} |\{$faces $F$ of $P | v$ is a sink of $F$ with respect to $O\}| = \sum_{n=0}^d f_i$. So $f^O \leq \sum_{i=0}^d f_i$. By Lemma \ref{fOfaces}, this implies that $f^O = \sum_{i=0}^d f_i$.
\end{proof}

\begin{lemma}
\label{affine}
Let $K$ be a finite set of $n$-dimensional affine spaces contained in an $(n+m+1)$-dimensional vector space $V$.  Let $T$ be an open subset of $V$. Then there exists an $m$-dimensional affine space whose intersection with the union of $K$ is empty and whose intersection with $T$ is nonempty. Furthermore, there is such a space parallel to any $m$-dimensional affine space.
\end{lemma}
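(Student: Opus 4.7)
The plan is to reduce the problem to a quotient construction. Given a prescribed direction---that is, an $m$-dimensional linear subspace $L \subseteq V$---I will try to find a translate $L + v$ that works; this automatically handles the ``furthermore'' clause, since every $m$-dimensional affine space has a unique direction.

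Let $\pi \colon V \to V/L$ denote the quotient projection, so $\dim(V/L) = (n+m+1) - m = n+1$. The key observation is that the fibers of $\pi$ are exactly the translates of $L$, so $L + v$ meets a set $S \subseteq V$ if and only if $\pi(v) \in \pi(S)$. I apply this in two directions. First, for each $A \in K$ the image $\pi(A)$ is an affine subspace of $V/L$ of dimension at most $\dim A = n$, hence a proper affine subspace of the $(n+1)$-dimensional space $V/L$. Second, since $\pi$ is an open map and $T$ is open and nonempty (as the lemma implicitly requires, else no such affine space can exist), $\pi(T)$ is a nonempty open subset of $V/L$.

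A nonempty open set in a finite-dimensional real vector space cannot be covered by finitely many proper affine subspaces, for instance by a Baire category or Lebesgue measure argument. Hence there exists $\bar v \in \pi(T) \setminus \bigcup_{A \in K} \pi(A)$, and any lift $v \in \pi^{-1}(\bar v)$ produces the desired $m$-dimensional affine space $L + v$: it has direction $L$, it meets $T$, and it is disjoint from every element of $K$ and therefore from their union.

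The one computation that really drives the argument is the dimension count $\dim(V/L) = n + 1 > n \geq \dim \pi(A)$; this is exactly where the hypothesis $\dim V = n+m+1$ (rather than merely $n+m$) is essential, since the single extra dimension is what makes the generic translate miss each $A$. I do not expect any significant obstacle beyond noting this count and handling the degenerate case $T = \emptyset$.
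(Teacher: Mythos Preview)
Your proposal is correct and is essentially the paper's own argument: both pass to the quotient by the direction of a prescribed $m$-dimensional affine space, observe that the images of the members of $K$ are proper affine subspaces of the $(n+1)$-dimensional quotient while $\pi(T)$ remains open, and pick a point of $\pi(T)$ avoiding the finite union. Your write-up is slightly more careful in two places---you flag that $T$ must be nonempty and you name the Baire/measure reason a nonempty open set cannot be covered by finitely many proper affine subspaces---but the route is the same.
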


\begin{proof}
Let $H$ be an arbitrary $m$-dimensional affine space in $V$. Let $\pi$ be a projection of $V$ so that $\pi(H) = \{h\}$, a single point in $V$, and so that $\pi(V)$ is an $(n+1)$-dimensional vector space. Since $T$ is open, $\pi(T)$ is open in $\pi(V)$. Since $H$ could at worst be normal to affine spaces in $K$, $\pi(\bigcup K)$ is the union of a set of affine spaces of dimensions possibly ranging from 0 to $n$. Since $\pi(V)$ is an $(n+1)$-dimensional space, and $\pi(\bigcup K)$ is the union of affine spaces of dimension at most $n$, there is a point in $\pi(V)$ contained in $\pi(T)$ and not contained in $\pi(\bigcup K)$; call it $h'$. Take $H'$ to be the pre-image of $h'$ under $\pi$; $H' = \pi^{-1}(h')$. Then $H'$ nontrivially intersects $T$, and $H'$ intersected with $\bigcup K$ is empty. So $H'$ is the desired $m$-dimensional affine space. By construction, $H$ and $H'$ are parallel, and the choice of $H$ was arbitrary, so there is an $H'$ parallel to any $m$-dimensional affine space.
\end{proof}

\begin{definition}
A polytope $P$ is \textit{h-nearly simple} if and only if $\deg_{G_P}(v)=\dim(P)$ for all but exactly $h$ vertices of $P$.
\end{definition}

\begin{definition}
Let $P$ be a polytope with face $F$. \textit{P truncated at F} is the polytope obtained by intersecting $P$ with a halfspace which does not contain the vertices of $F$ and whose interior contains the vertices of $P$ that are not contained in $F$.
\end{definition}

The face lattice of $P$ truncated at $F$ is obtained from the face lattice of $P$ by the following sequence of operations:

Let $I$ be the half open interval $(\emptyset, F]$. For each face $K$ not in $I$ that is greater than a face in $I$, create a face $K'$. The covering relations for $K'$ are as follows: If $J$ is a face of the lattice that is greater than some face in $I$ and $J$ covers $K$, then $J'$ covers $K'$ and $K$ covers $K'$. Each 0-face created this way covers $\emptyset$. Then $I$ is deleted from the face lattice.

The geometric interpretation of this is that the face $F$ is removed and a new facet $F'$ is created. Each face of $P$ that was not a subface of $F$ but intersected $F$ non-trivially is preserved, and creates a face of dimension one smaller than itself where it intersects the new hyperplane.

\begin{lemma}
\label{trunc}
Let $P$ be a d-dimensional polytope with vertex $x$. Let $T$ be a face of $P$ containing $x$. Let $P'$ be $P$ truncated at $T$ and let $F$ be the facet of $P'$ that is entirely contained in the boundary of the halfspace added by truncation. Let $w$ be a vertex of $P$ adjacent to $x$ not contained in $T$. Let $v$ be the vertex of $P'$ contained in $F$ and adjacent to $w$ considered as a vertex in $P'$. Then $P'$ is simple at $v$ if $P$ is simple at $w$.
\end{lemma}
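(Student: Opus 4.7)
The plan is to count the edges of $P'$ incident to $v$ directly from the geometric description of truncation given just before the lemma. Since $v$ is a new vertex created on the cut edge $xw$, its incident edges in $P'$ split into two types: the residual segment $vw$ (what remains of $xw$ after truncation), and, for each 2-face of $P$ sliced by the cutting hyperplane through $v$, a new edge lying inside the new facet $F$.

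First I would pin down the residual edges at $v$. The only edge of $P$ passing through the point $v$ on the hyperplane is $xw$, so the only residual edge of $P'$ at $v$ is $vw$; every other edge of $P'$ incident to $v$ must lie in $F$.

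Next I would set up a bijection between the new edges at $v$ inside $F$ and the 2-faces of $P$ containing the edge $xw$. Given such a 2-face $G$, the fact that $x \in T$ and $w \notin T$ forces $G$ to have vertices on both sides of the hyperplane, so $G \cap F$ is a 1-dimensional segment with $v$ as one endpoint; by the truncation recipe this segment is an edge of $F$, and hence of $P'$. Conversely, any new edge of $P'$ at $v$ inside $F$ lies in some 2-face of $P$ meeting the hyperplane at $v$, which forces that 2-face to contain $xw$. Distinctness follows because two 2-faces of $P$ sharing the edge $xw$ together with a second common edge in $F$ would intersect in a 2-dimensional face and hence coincide (the intersection of two faces of a polytope is a face).

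The simplicity hypothesis then finishes the count. Since $P$ is simple at $w$, the faces of $P$ containing $w$ are in bijection with subsets of the $d$ edges at $w$; in particular, the 2-faces containing the edge $xw$ correspond to the $d-1$ single-element subsets consisting of the other edges at $w$. Combined with the edge $vw$, this gives $1+(d-1)=d$ edges of $P'$ incident to $v$, and since $\dim(P')=d$, we conclude $P'$ is simple at $v$. The main obstacle I anticipate is the clean bijection between 2-faces of $P$ through $xw$ and new edges of $P'$ at $v$; once that is nailed down via the explicit truncation procedure, the simplicity count is automatic.
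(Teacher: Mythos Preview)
Your proposal is correct and follows essentially the same argument as the paper: both count the edges of $P'$ at $v$ by identifying the single residual edge $vw$ and then putting the remaining edges (those lying in $F$) in bijection with the $d-1$ two-faces of $P$ through $xw$, using simplicity at $w$ for the count. If anything, you are slightly more explicit than the paper about why distinct $2$-faces through $xw$ yield distinct new edges in $F$.
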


\begin{proof}
Assume $P$ is simple at $w$.The edges of $F$ are the intersections of the new hyperplane with 2-faces of $P$. The edges of $F$ containing $v$ are contained in 2-faces of $P$ containing the edge between $x$ and $w$. Since $P$ is simple at $w$, any subset of edges incident to it define a face with dimension equal to the number of edges in the subset. There are exactly $d-1$ pairs of edges incident to $w$ that have $xw$ as one of the pair. So there are exactly $d-1$ 2-faces of $P$ containing $xw$. So there are exactly $d-1$ edges in $F$ containing $v$.  The edge $vw$ is the unique edge of $P'$ not in $F$ containing $v$. So $v$ is contained in exactly $d$ edges of $P'$. So $P'$ is simple at $v$.
\end{proof}

\begin{corollary}
If $P$ is simple at all vertices adjacent to $x$, then $P$ truncated at $x$ is simple at all vertices in $F$.
\end{corollary}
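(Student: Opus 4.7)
The plan is to apply Lemma \ref{trunc} in the special case where the truncated face $T$ is the vertex $x$ itself, running over every vertex of $F$ separately.

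First I would observe that, when $T = \{x\}$, the truncating hyperplane meets exactly those 1-faces of $P$ that contain $x$. Consequently the vertices of the new facet $F$ are in bijection with the edges of $P$ incident to $x$, and each vertex $v$ of $F$ lies on a unique edge $xw$ of $P$ with $w \neq x$ and $w \notin T = \{x\}$. Thus $w$ is a vertex of $P$ adjacent to $x$ and not contained in $T$, and $v$ is the vertex of $P'$ lying in $F$ that is adjacent to $w$ in $P'$. This puts us exactly in the hypothesis of Lemma \ref{trunc}.

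Next, since by assumption $P$ is simple at every vertex adjacent to $x$, the vertex $w$ attached to the given $v$ by the bijection above is such a simple vertex. Invoking Lemma \ref{trunc} with this choice of $T$, $w$ and $v$ yields that $P'$ is simple at $v$. Because $v$ was an arbitrary vertex of $F$, every vertex of $F$ is simple in $P'$, which is the claim.

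The only step that needs any thought is the bijection between vertices of $F$ and vertices of $P$ adjacent to $x$, and this is a routine consequence of the geometric description of truncation given just before Lemma \ref{trunc}: the new hyperplane separates $x$ from all other vertices of $P$, so it must cross each edge at $x$ exactly once and cannot cross any edge not containing $x$. Everything else is a direct appeal to the previous lemma, so there is no real obstacle.
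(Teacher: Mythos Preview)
Your proposal is correct and matches the paper's approach: the corollary is stated immediately after Lemma \ref{trunc} without proof, so the intended argument is exactly the direct application of the lemma with $T=\{x\}$ to each vertex of $F$, which is what you do. The bijection you spell out between vertices of $F$ and neighbors of $x$ is the only thing one needs to check, and your justification via the truncating hyperplane is fine.
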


\begin{definition}
A \textit{$k$-regular induced connected subgraph} ($k$-rics) initial with respect to an acyclic orientation $O$ of a graph $G$ is a subgraph $J$ of $G$ such that:
\begin{enumerate}
\item $V_J$ is an initial set of $G$ with respect to $O$.

\item $J$ is an induced subgraph of $G$.

\item $J$ is $k$-regular and connected.
\end{enumerate}
\end{definition}

\begin{theorem}
\label{1simple}
Let $P$ be a 1-nearly simple polytope. Then the face lattice of $P$ can be determined from the graph of $P$.
\end{theorem}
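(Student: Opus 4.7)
The plan is to reduce to Theorem~\ref{Kalai} by truncating $P$ at its unique non-simple vertex $x$, determining the graph of the resulting simple polytope $P'$ from $G_P$, and then reconstructing the face lattice of $P$ from that of $P'$ by inverting the truncation operation. The vertex $x$ is the unique vertex of $G_P$ whose degree exceeds the common degree $d$ of the other vertices. Old vertices of $P'$ are simple in $P$ and retain degree $d$, while the new vertices lying on the truncating facet of $P'$ are simple in $P'$ by Lemma~\ref{trunc} applied with $T=\{x\}$; hence $P'$ is simple, and Theorem~\ref{Kalai} determines its face lattice from $G_{P'}$. Because the truncating facet of $P'$ is known combinatorially as the set of new vertices, the inverse of the truncation recipe then recovers the face lattice of $P$.

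Recovering $G_{P'}$ from $G_P$ is routine outside the truncating facet. The missing information is its edge set: two new vertices $v_{xu}$ and $v_{xw}$ are adjacent in $P'$ exactly when the edges $xu$ and $xw$ of $P$ lie on a common $2$-face of $P$. I will identify these pairs via the acyclic orientations of $G_P$ minimizing $f^O$, which by Lemmas~\ref{fOfaces} and~\ref{fOimplies} are exactly the acyclic orientations satisfying the three listed conditions.

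The central claim is that the pairs of edges at $x$ arising as in-edges of $x$ in some $f^O$-minimizing orientation with $\indeg(x)=2$ are exactly the edge-pairs lying on a common $2$-face of $P$. The direction $(\Rightarrow)$ is immediate from Lemma~\ref{fOimplies}: the face $F_x$ it produces is simple at $x$ of dimension $2$, hence is a $2$-face whose edges at $x$ are the in-edges. For $(\Leftarrow)$, given a $2$-face $Q$ at $x$ with edges $xu,xw$, I would realize $\{xu,xw\}$ as the in-edges at $x$ via a linear functional orientation. Let $n$ be an outward normal to a supporting hyperplane of $Q$, so that $n\cdot(u-x)=n\cdot(w-x)=0$ while $n\cdot(w'-x)<0$ for every other neighbor $w'$ of $x$ in $P$. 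Choose $\psi$ with $\psi(u-x)<0$ and $\psi(w-x)<0$, and set $\phi=\psi-\epsilon n$ with $\epsilon$ large enough that $\phi(w'-x)>0$ for each such $w'$. Lemma~\ref{affine} applied to the arrangement of hyperplanes $\{\phi:\phi\cdot(v_i-v_j)=0\}$ for distinct vertices $v_i,v_j$ of $P$ lets me perturb $\phi$ generically inside this open region so that the induced ordering of the vertices of $P$ is injective while the desired sign pattern at $x$ is preserved.

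The main obstacle is verifying that this linear functional orientation $O$ actually satisfies the three conditions of Lemma~\ref{fOimplies}, and is therefore an $f^O$-minimizer. Uniqueness of sinks follows from genericity. Each sink $v\ne x$ is simple in $P$, hence simple in every face containing it, and the face cut out at $v$ by its in-edges serves as $F_v$. The delicate case is a face $F'$ of $P$ with $x$ as its sink: the edges of $F'$ at $x$ must lie in $\{xu,xw\}$, and any $F'\supsetneq Q$ containing $x$ would have an edge direction at $x$ outside the span of $u-x$ and $w-x$, forcing a neighbor $w'\notin Q$ of $x$ into $F'$ with $\phi(w')>\phi(x)$ and contradicting the sink condition. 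So the faces of $P$ with $x$ as sink are exactly the subfaces $\{x\},xu,xw,Q$ of $Q$; each is simple at $x$ because every vertex of a polygon is simple in it, and $Q$ plays the role of $F_x$. This establishes $(\Leftarrow)$ and completes the reconstruction.
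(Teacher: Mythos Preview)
Your argument is correct and follows the same global plan as the paper---truncate at the unique non-simple vertex $x$, recover $G_{P'}$ by identifying which pairs of edges at $x$ span a $2$-face, apply Theorem~\ref{Kalai}, and invert the truncation---but you detect those edge-pairs by a different mechanism. The paper works with $f^O$-minimizing orientations in which $x$ is a \emph{source} and characterizes the $2$-faces through $x$ as the $2$-rics containing $x$ that are initial with respect to some such orientation; the sink of the $2$-face is then a simple vertex, and Lemma~\ref{fOimplies} applies there. You instead look at $f^O$-minimizing orientations in which $\indeg(x)=2$ and read off the $2$-face as the face $F_x$ promised by Lemma~\ref{fOimplies} at the non-simple vertex itself. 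Your route is a bit more delicate, since you must check the three conditions of Lemma~\ref{fOimplies} at $x$ (Balinski's theorem is implicitly used to rule out a face $F'$ with sink $x$ and $\dim F'\ge 3$, and the case $\dim F'=2$, $F'\neq Q$ should be excluded by noting that two $2$-faces sharing the two edges $xu,xw$ must coincide), whereas the paper's choice keeps $x$ out of the picture entirely by making it a source. On the other hand, your criterion reads off the edge-pair directly from $O$ without having to search for an initial $2$-rics, which is arguably cleaner. One small logical point: your claim that $f^O$-minimizers are exactly the orientations satisfying the three conditions presupposes that the minimum value is $\sum f_i$; this is only established once you have exhibited one such orientation (or, as the paper does, once you observe that any linear functional orientation with $x$ as source already attains it).
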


Let $x$ be the single vertex of $P$ at which $P$ is not simple. To prove this, we will use the graph to determine the 2-faces of $P$ containing $x$. Then the graph of $P$ truncated at $x$ will be determined. By Lemma \ref{trunc}, $P$ truncated at $x$ is a simple polytope. Then we use Theorem \ref{Kalai} to reconstruct the face lattice of $P$ truncated at $x$, and finally use the knowledge of which facet was created to obtain the face lattice of $P$.

\begin{proof}

Let $x$ be the single vertex of $P$ at which $P$ is not simple. Let $O$ be a linear functional orientation with respect to which $x$ is an initial set of $G_P$. Since $x$ is an initial set, indeg$(x) =0$. The polytope $P$ is simple at every vertex of $P$ except $x$. Since $O$ is a linear functional orientation, each face has a unique sink. Since $P$ is simple at all vertices except $x$, each face is simple at its sink. Furthermore, since $P$ is simple at all vertices except $x$, for each vertex $v$ of $P$, the edges oriented towards $v$ determine a face of $P$ with $v$ as its sink which contains those edges. By Lemma \ref{fOimplies}, this gives an orientation of $G_P$ for which $f^O = \sum_{i=0}^d f_i$. Therefore any acyclic orientation $O$ of $P$ which minimizes $f^O$ must have $f^O = \sum_{i=0}^d f_i$. 

For each 2-face of $P$ containing $x$, there is a linear functional orientation $O$ for which the graph of that face is a 2-rics initial with respect to $O$, and for which $x$ is an initial set with respect to $O$.

Let $J$ be a 2-rics of $G_P$ which contains $x$ and that is initial with respect to some acyclic orientation $O$ which minimizes $f^O$. Since $O$ is acyclic, $J$ has a sink $v$. By Lemma \ref{fOimplies}, there is a 2-face of $P$, $F'$, with $v$ as its sink. Since $J$ is an initial subgraph, and $F'$ has a unique sink which is contained in $V_J$, each vertex of $F'$ comes before a vertex of $J$ in the partial order given by $O$. Therefore, $V_{F'} \subset V_J$. Since both $J$ and $G_{F'}$ are 2-regular connected subgraphs, $J = G_{F'}$, so $J$ is the graph of a 2-face of $P$ containing $x$.

Therefore, the graphs of 2-faces of $P$ containing $x$ are exactly the 2-rics of $G_P$ which contain $x$ and that are initial with respect to some $f^O$-minimizing acyclic orientation.

Let $P'$ be $P$ truncated at $x$, with $F$ the resulting facet. Since $P$ is simple at every vertex except $x$, by Lemma \ref{trunc}, $P'$ is a simple polytope. The edges of $F$ can be determined from the 2-faces of $P$ containing $x$. Since the 2-faces of $P$ containing $x$ can be determined from the graph of $P$, the graph of $P'$ can be determined from the graph of $P$. Since $P'$ is simple, the face lattice of $P'$ can be determined from the graph of $P$ by Theorem \ref{Kalai}.

Since $P'$ is $P$ truncated at $x$, the entire face lattice of $P$, excluding $x$, appears in the face lattice of $P'$. To obtain the face lattice of $P$ from the face lattice of $P'$, in each face, replace the interval $(\emptyset, F]$ with $x$, with order relations for $x$ given as follows: If $G$ is a face of $P'$ and the intersection of $F$ and $G$ is not empty, then $G > x$. Furthermore $x > \emptyset$.

In conclusion, the face lattice of $P$ can be determined from the graph of $P$.
\end{proof}

\begin{definition}
A \textit{binding} is a $(d-2)$-dimensional affine space in a $d$-dimensional vector space.
\end{definition}

\begin{definition}
A \textit{page} is half of a $(d-1)$-dimensional affine space whose boundary is a binding in a $d$-dimensional vector space.
\end{definition}

\begin{definition}
A \textit{$y$-sink acyclic orientation} is an acyclic orientation for which $y$ as a sink of $G_P$.
\end{definition}

\begin{definition}
A \textit{$y$-sink $f^O$-minimizing acyclic orientation} is a $y$-sink acyclic orientation which minimizes $f^O$ among all $y$-sink acyclic orientations.
\end{definition}

\begin{lemma}
\label{ysink}
Let $P$ be a 2-nearly simple polytope with $y$ one of its non-simple vertices. Define $K := 2^{\deg(y)}-|\{\text{faces containing }y\}|$. Then there exist $y$-sink acyclic orientations $O$ with $f^O = \sum_{i=0}^d f_i(P) + K$.
\end{lemma}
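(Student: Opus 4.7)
The plan is to exhibit $O$ as a linear functional orientation from a carefully chosen direction vector $c$, so that in the identity $f^O = \sum_{i=0}^d f_i(P) + K$, the deficit $K$ arises entirely at $y$.

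First, I would use Lemma \ref{affine} to choose $c \in \mathbb{R}^d$ in the (nonempty and full-dimensional) open cone of directions for which $y$ is the strict $c$-maximum on the vertices of $P$, with the additional genericity that $c$ takes distinct values on those vertices. Let $O$ be the resulting linear functional orientation of $G_P$. Then $O$ is acyclic, each face of $P$ has a unique sink (its $c$-maximum), $y$ is a sink of $G_P$, and every face containing $y$ has $y$ as its sink because $y$ is the global $c$-maximum.

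Next, I would decompose
\[
f^O - \sum_{i=0}^d f_i(P) = \sum_{v \in V_P} \delta_v, \qquad \delta_v := 2^{\indeg(v)} - |\{F : v \text{ is the sink of } G_F\}|.
\]
Each $\delta_v \geq 0$ by Lemma \ref{fOfaces}. At every simple vertex $v \neq y$ (of which there are $|V_P| - 2$ by the 2-nearly simple hypothesis), the bijection between subsets of edges at $v$ oriented toward $v$ and faces of $P$ containing those edges (available because $v$ is simple, as in the $\Leftarrow$ direction of Lemma \ref{fOimplies}) gives $\delta_v = 0$. At $y$, since $\indeg(y) = \deg(y)$ and every face containing $y$ has $y$ as its sink, $\delta_y = 2^{\deg(y)} - |\{F : y \in F\}| = K$. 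It follows that $f^O = \sum_{i=0}^d f_i(P) + K + \delta_x$, so the lemma reduces to finding $c$ with $\delta_x = 0$.

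The crux, and the main obstacle, is arranging $\delta_x = 0$. The condition $\delta_x = 0$ is equivalent to requiring that every subset $S$ of the edges at $x$ oriented toward $x$ is the set of edges at $x$ of some face of $P$ having $x$ as its sink. The easiest scenario is to also choose $c$ in the open cone where $x$ is the strict $c$-minimum on $P$; then $x$ is a source, $\indeg(x) = 0$, and the condition is trivial. When this intersection of cones is empty, I would apply Lemma \ref{affine} to the hyperplane arrangement $\{c : c \cdot (w - x) = 0, \ w \text{ a neighbor of } x\}$ within the cone where $y$ is the strict maximum, looking for a chamber whose incoming star at $x$ is the vertex set of a simplex face of the local face structure at $x$ (equivalently, of the vertex figure of $P$ at $x$): in such a chamber, every subset of incoming edges at $x$ does determine a face of $P$ with $x$ as sink. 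Proving that such a chamber always exists inside the cone is the main technical step; it uses that all neighbors of $x$ besides $y$ are simple vertices, which tightly constrains how the hyperplanes of the arrangement can interact with the cone of directions fixing $y$ as maximum.
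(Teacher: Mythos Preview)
Your decomposition $f^O - \sum_i f_i(P) = \sum_v \delta_v$ is correct, and you have correctly isolated the problem: once $y$ is the global sink and every face has a unique sink, all simple vertices contribute $\delta_v=0$, $\delta_y=K$, and the entire lemma reduces to arranging $\delta_x=0$. The gap is that you never actually establish that a linear functional $c$ with $y$ the strict maximum and $\delta_x=0$ exists. Your first attempt (make $x$ the global $c$-minimum as well) can certainly fail, and your fallback (``find a chamber of the arrangement at $x$ whose incoming star is a simplex face of the vertex figure'') is asserted, not proved. You say yourself this is ``the main technical step,'' but you do not carry it out, and it is not clear it can be carried out with linear functionals alone: there is no evident reason why, among the finitely many chambers of the edge-hyperplane arrangement at $x$ that lie inside the $y$-max cone, one must cut out a simplicial face of the vertex figure at $x$.

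The paper avoids this difficulty by abandoning linear functional orientations. It chooses a $2$-face $F$ containing $x$ but not $y$, a hyperplane $H$ separating $F$ from the remaining vertices, and a hyperplane $J$ separating $y$ from the remaining vertices; their intersection is a $(d-2)$-dimensional \emph{binding} $E$, which Lemma~\ref{affine} lets one perturb off every line through two vertices of $P$. Ordering vertices by the angle of the page through $E$ containing them (measured from the page through $y$) gives an acyclic orientation in which $y$ is the unique sink, every face has a unique sink, and the vertices of $F$ come last. Consequently the only edges oriented into $x$ are the (at most two) edges of the polygon $F$, and one checks directly that the faces $\{x\}$, the one or two incoming edges, and possibly $F$ itself account for all $2^{\indeg(x)}$ subsets, so $\delta_x=0$. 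The point is that this ``book'' ordering lets you force $y$ to be the sink and simultaneously push $x$ into a last-place $2$-face, something a single linear functional generally cannot do; that is the idea your argument is missing.
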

\begin{proof}
Let $P$ be a $d$-dimensional 2-nearly simple polytope.
Let $x$ and $y$ be the two non-simple vertices of $P$.

Define $K := 2^{\deg(y)}-|\{\text{faces containing }y\}|$.

Let $F$ be a 2-face of $P$ containing $x$ and not containing $y$. Let $\mathcal{H}$ be the set of all hyperplanes that separate $F$ from the other vertices of $P$ and let $\mathcal{J}$ be the set of all hyperplanes that separate $y$ from the other vertices of $P$. Let $B$ be the set of bindings that are the intersection of some pair of elements of $\mathcal{H}$ and $\mathcal{J}$ and whose intersection with $P$ is the empty set. Let $H$ and $J$ be a pair of hyperplanes in $\mathcal{H}$ and $\mathcal{J}$, such that $H$ and $J$ are not parallel, each point in $H \bigcap P$ is within $\epsilon$ of a point in $F$ and each point in $J \bigcap P$ is within $\epsilon$ of $y$. Then $H \bigcap J$ is a binding not intersecting $P$, so $B$ is nonempty. Let $b= H \bigcap J$. Let $\vec{n}_H$ and $\vec{n}_J$ be normal vectors of $H$ and $J$. Let $\pi_b$ be the projection from $\mathbb{R}^d$ to a 2-dimensional vector space such that $\pi_b(b)$ is a single point. Since $H$ and $J$ are hyperplanes, $\vec{n}_H$ and $\vec{n}_J$ are unique up to a scalar. There exists some $\epsilon$ such that $H+\delta\vec{n}_H$ is in $\mathcal{H}$ for all $\delta$ such that $|\delta| < \epsilon$ and such that $J+\delta\vec{n}_J$ is in $\mathcal{J}$ for all $\delta$ such that $|\delta| < \epsilon$. The intersection of $H+\delta\vec{n}_H$ and $J+\delta\vec{n}_J$ is a binding in $B$ that is parallel with $b$.
Let $T=\{\pi_b(s)|s=\left(H+\delta_1\vec{n}_H\right) \bigcap\left( J+\delta_2\vec{n}_J \right), |\delta_1|,|\delta_2| < \epsilon\}$. Then $T$ is an open set that contains all points within $\epsilon$ of $\pi_b(b)$ in the image of $\pi_b$. Let $\mathfrak{K'}$ be the set of lines between vertices of $P$, and let $\mathfrak{K}=\{\pi_b(K) | K \in \mathfrak{K'}\}$. Then we can apply Lemma \ref{affine} to obtain a point $e$ in $T$ avoiding $\mathfrak{K}$. Let $E$ be the binding that projects to $e$ under $\pi_b$. Therefore $E$ is a binding that is the intersection of a hyperplane in $\mathcal{H}$ and a hyperplane in $\mathcal{J}$ that does not intersect any line between a pair of vertices of $P$.
From $E$, we will derive an orientation of $G_P$ by defining a total order on the vertices of $G_P$. Take $T$ to be the page containing $y$ with binding $E$. The order on vertices of $P$ is given by the following function:
$$ f: V_P \rightarrow [0,\pi)$$
$$ f(v) := \text{the minimum angle between }T\text{ and the page with binding }E\text{ that contains }v$$

By the construction of $E$, $f(v)$ is injective. Let $O$ be the orientation of $G_P$ that orients the edge $vw$ towards $v$ when $f(v) < f(w)$ and towards $w$ when $f(v) > f(w)$. We will call such orientations book orientations. Since $P$ is convex, and its vertices are totally ordered under $f(v)$, it has a unique sink with respect to $O$. This sink is $y$, since $f(y)=0$ and $f(v)$ is injective with domain $[0,\pi)$. The page contained in the hyperplane $H$ used to define $E$ has a larger angle with the page with binding $E$ containing $y$ than $f(v)$ for any vertex $v$ not in $F$, and a smaller angle than $f(w)$ for any vertex $w$ in $F$.  Under $O$, each face of $P$ has a unique sink. For every vertex of $P$ except $y$, $2^{\indeg(v)}=|\{$faces of $P$ with $v$ a sink of the face with respect to $O\}|$. For $y$, $2^{\indeg(y)}=|\{$faces of $P$ with $v$ a sink of the face with respect to $O\}|+K$. Therefore, summing over vertices gives $f^O = \sum_{i=0}^d f_i + K$.
\end{proof}

\begin{theorem}
\label{2simple}
Let $P$ be a 2-nearly simple polytope. Then the face lattice of $P$ can be determined from the graph of $P$.
\end{theorem}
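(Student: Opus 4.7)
The plan is to extend the strategy of Theorem \ref{1simple}. Let $x$ and $y$ denote the two non-simple vertices of $P$; both are recognizable in $G_P$ as the unique vertices of degree exceeding the common minimum degree $d = \dim P$. The goal is to determine all 2-faces of $P$ containing $x$, read off the graph of $P$ truncated at $x$, verify that the resulting polytope $P'$ is essentially $1$-nearly simple so that Theorem \ref{1simple} applies, and then reverse the truncation at the face-lattice level.

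First I would establish that among $y$-sink acyclic orientations the minimum value of $f^O$ equals $\sum_{i=0}^d f_i(P) + K$, with $K = 2^{\deg y} - |\{\text{faces containing }y\}|$. The lower bound comes from the inequality chain of Lemma \ref{fOfaces} together with the observation that in any $y$-sink orientation every face containing $y$ must have $y$ as its unique sink (since all edges at $y$ are oriented inward), so the excess $2^{\indeg y} - |\{F \ni y : y \text{ is a sink of } G_F\}|$ is at least $K$; Lemma \ref{ysink} furnishes the matching upper bound. Equality in a $y$-sink $f^O$-minimizing orientation then yields a variant of Lemma \ref{fOimplies}: each face of $P$ has a unique sink, each face is simple at its sink whenever that sink is not $y$, and for every $v \neq y$ the face $F_v$ of $P$ containing exactly the edges oriented toward $v$ exists. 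Copying the argument of Theorem \ref{1simple}, the graphs of 2-faces of $P$ not containing $y$ are precisely the 2-rics of $G_P$ that are initial with respect to some $y$-sink $f^O$-minimizing orientation: the forward direction uses the variant above (any initial 2-rics omits $y$ because $y$ is the global sink and would import edges into the 2-rics from outside, then the 2-dimensional subface of $F_v$ picked out by the two incoming edges at the 2-rics sink $v \neq y$ has vertex set contained in the 2-rics and equals it by 2-regularity and connectedness), and the reverse direction uses the book-orientation construction of Lemma \ref{ysink}, which works for any 2-face not containing $y$, not just ones containing $x$. A symmetric analysis with $x$-sink $f^O$-minimizing orientations determines the 2-faces of $P$ not containing $x$, so jointly we recover every 2-face of $P$ that omits at least one of $\{x,y\}$.

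The hard part is identifying the 2-faces that contain both $x$ and $y$. When $xy$ is not an edge of $P$, Lemma \ref{trunc} guarantees that truncating at $x$ introduces no new non-simple vertex, so the resulting $P'$ is $1$-nearly simple with $y$ its only non-simple vertex; when $xy$ is an edge, the missing 2-faces are exactly those containing the edge $xy$, and the truncation at $x$ may create an additional non-simple vertex on the image of $xy$. I would identify the missing 2-faces by characterizing them among the 2-rics of $G_P$ containing both $x$ and $y$: in any $y$-sink $f^O$-minimizing orientation such a 2-rics necessarily has $y$ as its sink, and an incidence condition against the already-determined 2-faces (for instance, that each non-$y$ edge of the cycle lies in a unique identified 2-face meeting the cycle only in that edge) should single out the genuine 2-faces; alternatively, when $xy$ is an edge one can first truncate at the edge $\{x,y\}$ and iterate the $1$-nearly-simple step. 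Once every 2-face containing $x$ is known, the graph of $P'$ is determined, Theorem \ref{1simple} (possibly applied after one further short truncation to remove a residual non-simple vertex in $P'$) recovers the face lattice of $P'$, and reversing the truncation as at the end of the proof of Theorem \ref{1simple} returns the face lattice of $P$.
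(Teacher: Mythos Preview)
Your overall strategy matches the paper's: identify the 2-faces through each non-simple vertex via $y$-sink and $x$-sink $f^O$-minimizing orientations (Lemma \ref{ysink}), truncate, reduce to a simpler case. Your derivation of the minimum value $\sum f_i + K$ and your argument that initial 2-rics with respect to a $y$-sink minimizer are graphs of 2-faces not containing $y$ are both essentially the paper's. The gaps are in how you handle the 2-faces containing \emph{both} $x$ and $y$, and here your two cases each miss the key idea the paper uses.

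In the non-adjacent case you jump straight to ``truncate at $x$ and apply Theorem \ref{1simple},'' but to write down $G_{P'}$ you must already know the edges of the new facet $F$, i.e.\ all 2-faces through $x$, and you have not determined any 2-face through both $x$ and $y$. The paper's fix is a counting argument you omit: since $x$ and $y$ are not adjacent, two distinct 2-faces through both would intersect in a face containing $\{x,y\}$, hence in the edge $xy$, which does not exist; so at most one 2-face contains both. Thus at most one edge of $F$ is missing. Because every neighbour of $x$ is simple, Lemma \ref{trunc} makes $F$ simple, so $G_F$ is $(d-1)$-regular; the missing edge, if any, is forced as the unique pair of vertices of degree $d-2$ in the partial graph. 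Your ``incidence condition'' is not needed and is not made precise.

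In the adjacent case your two proposals both fail. Truncating at the edge $xy$ is circular: the edges of the new facet come from 2-faces of $P$ containing $x$ or $y$, so you need precisely the 2-faces through $xy$ that you are trying to find. And the ``incidence condition against already-determined 2-faces'' is only a hope, not an argument. The paper instead observes that when $xy$ is an edge one can choose a linear functional with $x$ minimal and $y$ second; then $\indeg(x)=0$, $\indeg(y)=1$, every other vertex is simple, and one checks directly that $f^O=\sum_{i=0}^d f_i$ exactly. Now Lemma \ref{fOimplies} applies in full, and the 2-faces through both $x$ and $y$ are recovered as the 2-rics containing $x$ and $y$ that are initial with respect to some (global, not $y$-sink) $f^O$-minimizing orientation. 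With all 2-faces through $x$ or $y$ in hand, the truncation at the edge $xy$ is simple by Lemma \ref{trunc} and Theorem \ref{Kalai} finishes.
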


To prove this, we first determine all the 2-faces of $P$ containing exactly one non-simple vertex. Then we will split the proof into two cases. In each case, we determine the 2-faces that might contain both non-simple vertices.  If the non-simple vertices are adjacent, we truncate the edge between the non-simple vertices, which will result in a simple polytope. Then Theorem \ref{Kalai} constructs the face lattice, and we undo the truncation to arrive at the original polytope's face lattice. If the non-simple vertices are not adjacent, we truncate one of the vertices, then use Theorem \ref{1simple} to reconstruct the face lattice and undo the truncation to arrive at the original polytope's face lattice.

\begin{proof}

Let $P$ be a $d$-dimensional 2-nearly simple polytope.
Let $x$ and $y$ be the two non-simple vertices of $P$.

Define $K := 2^{\deg(y)}-|\{\text{faces containing }y\}|$.

 For $O$ a $y$-sink $f^O$-minimizing acyclic orientation, $y$ is a sink of $G_P$ with respect to $O$. Therefore, the following chain of equalities hold: $2^{\indeg(y)} = 2^{\deg(y)} =$ 
$|\{\text{faces containing }y\}|+K=$  \linebreak $ |\{\text{faces with }y\text{ a sink of that face with respect to O}\}|+K$. Repeating the proof of Lemma \ref{fOfaces} with this change gives $f^O \geq \sum_{i=0}^d f_i(P) + K$.

By Lemma \ref{ysink}, this bound is obtained and $y$-sink $f^O$ minimizing acyclic orientations have $f^O = \sum_{i=0}^d f_i(P) + K$.

Using the construction given in Lemma \ref{ysink}, each 2-face of $P$ containing $x$ that does not contain $y$ is an initial set of $G_P$ with respect to some $y$-sink $f^O$-minimizing orientation. So each 2-face of $P$ containing $x$ that does not contain $y$ is a 2-rics initial with respect to a $y$-sink $f^O$-minimizing orientation.

Let $F$ be a 2-rics of $G_P$ containing $x$ and not containing $y$, with $V_F$ an initial set with respect to a $y$-sink $f^O$-minimizing acyclic orientation. Since $f^O = \sum_{i=0}^d f_i + K$, and $y$ is a sink of $O$, for every vertex $v$ of $P$ that is not $y$, every subset of edges oriented towards $v$ defines a face of $P$ for which $v$ is the unique sink. Because $F$ has a sink, and that sink is not $y$, the two edges in $F$ incident to that sink form a subset of the edges oriented towards the sink, and so define a 2-face of $P$ which also has that vertex as a sink of its graph. Label the graph of this face $F'$. Each face of $P$ has a unique sink, so each vertex of $F'$ has a path following the orientation to the sink. Since $V_F$ is an initial set, $F' \subset F$. Then, since $F$ and $F'$ are both 2-regular connected graphs, $F' = F$. So $F$ is the graph of a face of $P$.

This argument can be repeated switching the roles of $x$ and $y$.

This argument gives that the 2-faces of $P$ that contain exactly one of $x$ or $y$ are exactly the 2-rics that contain exactly one of $x$ or $y$ whose vertex sets are initial sets with respect to an $x$ or $y$-sink $f^O$-minimizing acyclic orientation. In other words, 2-faces of $P$ containing exactly one of $x$ or $y$ can be determined from $G_P$.

From here, we break the argument into two cases depending on whether $x$ and $y$ are adjacent in $G_P$.

Case 1: $x$ is not adjacent to $y$.
 
Since $x$ and $y$ are not adjacent, there is at most one 2-face of $P$ containing both $x$ and $y$. Every 2-face of $P$ containing $x$ and not $y$ can be determined from the graph of $P$. The graph of $P$ truncated at $x$ can be determined from the 2-faces containing $x$ and the graph of $P$. Since $y$ is not adjacent to $x$,  each vertex of $P$ adjacent to $x$ is simple. By Lemma \ref{trunc}, the resulting facet obtained by truncation is simple. Each vertex of the resulting facet has $d-1$ edges within the facet incident to it. Since there is at most one 2-face of $P$ containing $x$ and $y$, there is up to one edge of the new facet that cannot be determined by finding the 2-faces that contain $x$ but not $y$. Since the new facet is simple, the graph of the facet is $(d-1)$-regular, and all but one edge of the graph is known. Then there is a pair of vertices that are in only $d-2$ known edges. Those vertices must then be adjacent. Therefore we can determine the graph of $P$ truncated at $x$. By Theorem \ref{1simple}, we can determine the face lattice of this polytope, which along with the graph of $P$ determines the face lattice of $P$.

Case 2: $x$ and $y$ are adjacent in $G_P$. Therefore $xy$ is a 1-face of $P$.

Assume $x$ and $y$ are adjacent. Then there is a linear functional $g$ for which $g(x) < g(y) < g(v)$ for each vertex $v$ such that $P$ is simple at $v$. Then from this linear functional, we obtain a linear functional orientation $O$ of $G_P$.  Since $\indeg(x) = 0$ and $\{x\}$ is the only face of $P$ with $x$ as its sink, $2^{\indeg(x)}=|\{$faces $F$ of $P | x$ is a sink of $F$ with respect to $O\}|$. Likewise, since $\indeg(y) = 1$, $\{y\}$ and $\{xy\}$ are the only two faces of $P$ with $y$ as their sink, $2^{\indeg(y)}=|\{$faces $F$ of $P | y$ is a sink of $F$ with respect to $O\}|$. Since $P$ is simple at each of its other vertices, $2^{\indeg(v)}=|\{$faces $F$ of $P | v$ is a sink of $F$ with respect to $O\}|$ for $v \neq x,y$. Therefore, $f^O = \sum_{v \in V_P} |\{$faces $F$ of $P | v$ is a sink of $F$ with respect to $O\}|$. Each face of $P$ has a unique sink with respect to $O$, since $O$ is a linear functional orientation. Therefore $\sum_{v \in V_P} |\{$faces $F$ of $P | v$ is a sink of $F$ with respect to $O\}| = \sum_{i=0}^d f_i$. So this orientation has $f^O =  \sum_{i=0}^d f_i$. So the minimum of $f^O$ among all acyclic orientations of $G_P$ is  $\sum_{i=0}^d f_i$ by Lemma \ref{fOfaces}. 

Let $F'$ be a 2-face of $P$ containing $x$ and $y$. Then there is a linear functional orientation with respect to which $V_{F'}$ is an initial set. For such an orientation, $f^O=\sum_{i=0}^d f_i$. This is because $2^{\indeg(x)}= |\{\text{faces with }x\text{ a sink of that face with respect to O}\}|$, and $2^{\indeg(y)}=|\{$faces with $y$ a sink of that face with respect to $O\}|$, since they lie on an initial 2-face.  The linear functional orientation given by a vector rotated $\epsilon$ from normal to $F$ is one such orientation. 

Let $J$ be a 2-rics of $G_P$ that contains $x$ and $y$ and is an initial set with respect to an $f^O$-minimizing acyclic orientation. Let $v$ be a sink of $J$. Since $f^O = \sum_{i=0}^d f_i$, by Lemma \ref{fOimplies}, there is a 2-face $F'$ of $P$ with $v$ as its unique sink with respect to $O$. Since $v$ is the unique sink of $G_{F'}$, each vertex of $F'$ has a path to
$v$ along $O$. Since $V_J$ is an initial set of $G_P$, $V_{F'} \subset V_J$. Then, since $J$ and $G_{F'}$ are 2-regular induced connected graphs, $G_{F'} = J$. So $J$ is the graph of a face of $P$.

Therefore every 2-face of $P$ containing $x$ and not $y$, containing $y$ and not $x$, or containing $x$ and $y$ can be determined from $G_P$. This is sufficient information to determine the graph of $P$ truncated at $xy$.

Let $P'$ be $P$ truncated at $xy$. By Lemma \ref{trunc}, $P'$ is simple at each vertex of the new facet, since $P$ is simple at each vertex not $x$ or $y$. So $P'$ is a simple polytope. Since the graph of $P'$ can be determined from the graph of $P$, the face lattice of $P'$ can be determined from the graph of $P$. The face lattice of $P$ can be determined from the face lattice of $P$ truncated at $xy$ and the graph of $P$. So the face lattice of $P$ can be determined from the graph of $P$.
\end{proof}

\section{Computational results}

\begin{example}
\label{example}

The following pair of 4-polytopes are 3-nearly simple combinatorially distinct polytopes with isomorphic graphs.
Furthermore, this is the pair of polytopes with the smallest number of faces that are combinatorially distinct and have isomorphic graphs. This is proved by exhaustion of all smaller combinatirially distinct polytopes listed in full by Miyata et al. \cite{Miyata}.

$P_3$ has 8 vertices labeled 0 through 7. Below its facets are given by the vertices they contain:
\begin{itemize}
\item $F_{1,1} = [7, 6, 5, 4, 3, 2]$
\item $F_{1,2} = [7, 6, 5, 4, 1, 0]$
\item $F_{1,3} = [7, 6, 3, 2, 1]$
\item $F_{1,4} = [7, 5, 3, 1, 0]$
\item $F_{1,5} = [6, 4, 2, 1]$
\item $F_{1,6} = [5, 4, 3, 0]$
\item $F_{1,7} = [4, 3, 2, 1, 0]$
\end{itemize}

$P_4$ has 8 vertices labeled 0 through 7. Below its facets are given by the vertices they contain:
\begin{itemize}
\item $F_{2,1} = [7, 6, 5, 4, 3, 2]$
\item $F_{2,2} = [7, 6, 5, 4, 1, 0]$
\item $F_{2,3} = [7, 6, 3, 2, 1]$
\item $F_{2,4} = [7, 5, 3, 1, 0]$
\item $F_{2,5} = [6, 4, 2, 1]$
\item $F_{2,6} = [5, 4, 3, 0]$
\item $F_{2,7} = [4, 3, 2, 1]$
\item $F_{2,8} = [4, 3, 1, 0]$
\end{itemize}

This pair of polytopes are identical except for the pair of structures involving $F_{1,7}$ and $F_{2,7}$ and $F_{2,8}$. In $P_3$, $F_{1,7}$ is the bipyramid over a triangle, whereas in $P_4$, $F_{2,7}$ and $F_{2,8}$ are simplices, corresponding to the upper and lower pyramid over the triangle. In general, this pair of structures gives identical graphs of polytopes and can be the bipyramid over any polygon or two pyramids over the same polygon.
 
\end{example}

With the help of Sage and the data provided by Miyata, Moriyama and Fukuda \cite{Miyata}, we can explicitly state polytopes that are uniquely determined by their graph. Since there are a finite number of polytopes on a fixed number of vertices, computing the graphs of each polytope with $k$ vertices gives a map from polytopes on $k$ vertices to graphs on $k$ vertices. When there is only one polytope mapped to a specific graph, we say that the polytope is reconstructible from that graph. This method gives no way to construct such a polytope from its graph without already knowing all polytopes with that many vertices. This is rather unhelpful in general, since if we knew all polytopes on a certain number of vertices, there would be little practical purpose to construct these polytopes from their graphs. However, this may give rise to a future insight into what is required to reconstruct a polytope from its graph. Below is the complete list of 4-polytopes on 7 vertices reconstructible from their graph, given as lists of facets with the vertices contained in that facet.

$P_5$: [[6, 5, 4, 3, 2, 1], [6, 5, 4, 3, 0], [6, 5, 2, 0], [6, 4, 2, 0], [5, 3, 1, 0], [5, 2, 1, 0], [4, 3, 1, 0], [4, 2, 1, 0]]

$P_6$: [[6, 5, 4, 3, 2, 1], [6, 5, 4, 3, 0], [6, 5, 2, 1, 0], [4, 3, 2, 1, 0], [6, 4, 2, 0], [5, 3, 1, 0]]

$P_7$: [[6, 5, 4, 3, 2, 1], [6, 5, 4, 3, 0], [6, 5, 2, 1, 0], [6, 4, 2, 0], [5, 3, 1, 0], [4, 3, 2, 0], [3, 2, 1, 0]]

$P_8$: [[6, 5, 4, 3, 2, 1], [6, 5, 4, 3, 2, 0], [6, 5, 1, 0], [6, 4, 1, 0], [5, 3, 1, 0], [4, 2, 1, 0], [3, 2, 1, 0]]

$P_9$: [[6, 5, 4, 3, 2], [6, 5, 4, 3, 1], [6, 5, 2, 1, 0], [6, 4, 2, 1, 0], [5, 3, 2, 0], [5, 3, 1, 0], [4, 3, 2, 0], [4, 3, 1, 0]]

$P_8$ is 2-nearly simple, $P_6$ is 1-nearly simple, and $P_7, P_5, P_9$ are 3, 5, and 6-nearly simple respectively. 

We do not list the 34 3-polytopes on 7 vertices that are reconstructible from their graphs here.

There are three further polytopes on 7 vertices that are reconstructible from their graph and dimension. They are:

$P_{10}$: [[6, 5, 4, 3], [6, 5, 4, 2], [6, 5, 3, 2], [6, 4, 3, 1], [6, 4, 2, 1], [6, 3, 2, 0], [6, 3, 1, 0], [6, 2, 1, 0], [5, 4, 3, 1], [5, 4, 2, 0], [5, 4, 1, 0], [5, 3, 2, 0], [5, 3, 1, 0], [4, 2, 1, 0]]

$P_{11}$: [[6, 5, 4, 3, 2, 1], [6, 5, 4, 3, 2, 0], [6, 5, 4, 3, 1, 0], [6, 5, 2, 1, 0], [6, 4, 2, 1, 0], [5, 3, 2, 1, 0], [4, 3, 2, 1, 0]]

$P_{12}$: [[6,5,4,3,2,1], [6,5,4,3,2,0], [6,5,4,3,1,0], [6,5,4,2,1,0], [6,5,3,2,1,0], [6,4,3,2,1,0], [5,4,3,2,1,0]] 

$P_{10}$ is a 4-polytope, $P_{11}$ is a 5-polytope, and $P_{12}$ is the 6-simplex. Each is the only polytope of their dimension mapped to their graph, but there are polytopes of a different dimension that are also mapped to their graph. This is more similar to the question of reconstructing simple polytopes, because the information that the polytope is simple along with its graph is equivalent to the information of its dimension along with its graph. 

\section{Conclusion}

With the definition of $h$-nearly simple convex polytopes, a broader description of which polytopes are reconstructible from their graph and which are not may be given. A 0,1, or 2-nearly simple convex polytope can be reconstructed from its graph. There exist 3-nearly simple convex polytopes which can not be reconstructed from their graphs. These results can be expanded upon in several ways. The essential next question is ``Is there a complete characterization of convex polytopes that can be reconstructed from their graph?" Some smaller questions are, ``Can these results be extended to the $k$-skeletons of polytopes?" and ``Are there any 3-nearly simple polytopes that cannot be reconstructed from their graphs whose non-simple vertices do not induce $K_3$?"

\section{Acknowledgements}

Many thanks to Margaret Bayer for pushing me to strive for more and keeping my thinking straight. 

\bibliographystyle{plain}
\bibliography{paper.bib}

\end{document}